\newcommand{\al}{\alpha}
\newcommand{\parx}{\{ x \}}
\newcommand{\BE}{\begin{equation}}
\newcommand{\EE}{\end{equation}}
\newcommand{\la}{\lambda}
\newcommand{\RR}{{\mathbb R}}
\newcommand{\TT}{{\mathbb T}}
\newcommand{\ome}{{\mathbb \omega}}
\newcommand{\R}{{\mathbb R}}
\newcommand{\ZZ}{{\mathbb Z}}
\newcommand{\NN}{{\mathbb N}}
\newcommand{\N}{{\mathbb N}}
\newcommand{\PP}{{\mathbb P}}
\newcommand{\EEE}{{\mathbb E}}
\newcommand{\ep}{\varepsilon}
\newcommand{\pjk}{\Psi^{(i)}_{j,k}}
\newcommand{\ptjk}{{\psi}^{(i)}_{j,k}}
\newcommand{\pjkun}{\psi^{(1)}_{j,k}}
\newtheorem{Theo}{Theorem}[section]
\newtheorem{lem}[Theo]{Lemma}
\newtheorem{coro}[Theo]{Corollary}
\newtheorem{prop}[Theo]{Proposition}
\newtheorem{defi}[Theo]{Definition}
\theoremstyle{definition}
\newtheorem{rem}[Theo]{Remark}
\newcommand{\BP}{\begin{prop}}
\newcommand{\EP}{\end{prop}}
\newcommand{\BC}{\begin{coro}}
\newcommand{\EC}{\end{coro}}
\newcommand{\BL}{\begin{lem}}
\newcommand{\EL}{\end{lem}}
\newcommand{\BD}{\begin{defi}}
\newcommand{\ED}{\end{defi}}
\newcommand{\BT}{\begin{Theo}}
\newcommand{\ET}{\end{Theo}}
\newcommand{\BR}{\begin{rem}}
\newcommand{\ER}{\end{rem}}
\newsavebox{\fmbox}
\newenvironment{fmpage}[1]
 {\begin{lrbox}{\fmbox}\begin{minipage}{#1}}
 {\end{minipage}\end{lrbox}\fbox{\usebox{\fmbox}}}
\def\E{{\hbox{I\kern-.2em\hbox{E}}}}
\author{C\'eline Esser\thanks{Université de Liège, B\^at. B37 Zone Polytech, All\'ee de la D\'ecouverte 12, B-4000 Li\`ege, Belgique},  St\'ephane Jaffard\thanks{ 
        Univ Paris Est Creteil, Univ Gustave Eiffel, CNRS, LAMA UMR8050, F-94010 Creteil, France
jaffard@u-pec.fr } and B\'eatrice
Vedel \thanks{Univ Bretagne Sud, CNRS UMR 6205, LMBA, F-56000, Vannes, France}}
 \date{\today} 
\title{Regularity properties of random wavelet series}
\begin{document}

 \maketitle

\begin{abstract}
We study the regularity properties of random wavelet series constructed by multiplying the coefficients of a deterministic wavelet series with unbounded I.I.D. random variables.
In particular, we show that, at the opposite to what happens for Fourier series, the randomization of almost every continuous function gives an almost surely nowhere locally bounded function.

 \end{abstract}

{ \bf MSC2020 :}  Primary: 42C40  ; Secondary 42A20 \\
{ \bf Keywords :} Random wavelet series, random  Fourier series, unconditional bases, generic regularity  \\ 

 \section{Introduction}
 
 The  study of series of functions with random coefficients has a long and rich history, at the interface of harmonic analysis and functional analysis.  It can be traced back to the seminal, but cryptic,  statement in 1896 by E. Borel that ``in general'', a Taylor series is not continuable  across  its circle of convergence, see e.g. \cite{Borel,Kahane1}.  Though the basic definitions of probability theory hadn't been coined at that time,  this statement was interpreted as stating that the coefficients are random with independent phases,  in which case the conclusion was proved by H. Steinhaus in the 1930 \cite{Steinhaus}.  This topic was later developed by  Paley and Zygmund,  who studied  Rademacher Fourier series, the coefficients of which are of the form $\pm a_n$, where $\pm$ denotes a sequence of independent Rademacher  variables (taking values $+1$ and $-1$ with probability $1/2$).  
 A typical result   they obtained is that  for all $ p \in [1, \infty )$,  the condition $\sum |a_n|^2  < \infty$  is  a necessary and sufficient condition  to ensure that almost surely,  the sample paths of the Rademacher Fourier Series belong to $L^p$ \cite{PZ}.  Many results  on random Fourier series   yield that the randomization of  coefficients has a regularization effect. One of the most striking evidence is supplied by the following theorem, see \cite{Kahane}. 
 
 \BT \label{theo1}  Let  $(X_n)_{n \in \ZZ}$ be a sequence of independent symmetric complex random variables  of variance 1 and let  $(a_n)_{n \in \ZZ}$ be a sequence of complex numbers.  Let
 \BE \label{sj} \forall j \in \N , \qquad s_j = \left( \sum_{ 2^j \leq |n|<2^{j+1} }  | a_n|^2  \right)^{1/2} . \EE 
 If the sequence $(s_j)_{j \in \N}$ is decreasing  and belongs to $\ell^1$, then almost surely the  random trigonometric series 
 \BE \label{rfs} \sum_{n \in \ZZ}  a_n X_n   e^{inx}  \EE
 is continuous. 
 \ET
 
 Note that the condition $\sum_j s_j < + \infty$ by itself is far from implying the continuity of the  associated deterministic Fourier series $\sum a_n e^{inx}$, hence the smoothing consequence of randomization. 
  In the Gaussian case, the exact condition for  a.s. continuity of sample paths  of \eqref{rfs}  was obtained by   Marcus and Pisier;   
let us also mention  another famous result which they derived  \cite{MarPis, MarPis2}. 

\BT  \label{proppis} Let $(a_n)_{n \in \ZZ}$ be a sequence of complex numbers and consider the two random Fourier series 
 \BE \label{rfs1} \sum_{n \in \ZZ}  a_n  B_n e^{inx}  \quad \mbox{  and  } \quad   \sum_{n \in \ZZ}  a_n  \chi_n e^{inx} \EE 
where  $(B_n)_{n \in \ZZ} $ is a sequence of independent Rademacher random variables and $(\chi_n)_{n \in \ZZ}$ is a sequence of  independent standard Gaussian random variables. Then, almost surely,  either both of them are continuous or both of them are not bounded. 
\ET

Our purpose in this paper is to investigate what such properties become for  general {\em random wavelet series}, i.e. series on orthonormal wavelet bases with random coefficients  which are independent (note that this terminology has been used previously in the less general setting where all wavelet coefficients at a given scale $j$ are I.I.D., see \cite{AJ02}).  We will  compare them with   results obtained previously for  Fourier series.
Actually, comparing  Fourier vs. wavelet expansions has a long history: it  can be traced  back to 1909 with the introduction of the Haar system, especially devised to supply a basis where the partial sums of the decomposition of a continuous function converges uniformly (in sharp contradistiction with F\'ejer's theorem which states that the Fourier series of a continuous function may diverge at certain points). Later, in  the 1940s,  some properties of    Brownian motion were derived from the initial construction of Wiener on the trigonometric system, whereas  sharp global and pointwise regularity properties were derived from the alternative decomposition of Ciesielski on the {\em Schauder basis} \cite{Cie61}; indeed, this basis  can be viewed as a special (non-orthogonal) wavelet basis, where the generating wavelet is the ``hat'' function 
 \[  \psi (x) =  1_{[0,1] } (x) \cdot  \min (x, 1-x) . \]  Since the introduction of smooth wavelet bases in the mid 1990s,  it has been noted many times that the functional  properties of trigonometric and wavelet expansions widely differ. In short, randomization of trigonometric series has a ``smoothing'' effect whereas it is not the case for wavelet series: This is  a consequence of the fact that both global and pointwise regularity conditions can be characterized by conditions on the moduli of the wavelet coefficients, whereas it is not the case for Fourier series, see \cite{JaffMey1,JaffToul} and ref. therein.


 Another classical topic  where   series of functions with coefficients $\pm 1$ show up is the study of expansions on bases in function spaces. 
Let $X$ be a separable  Banach space; a sequence $(e_n)_{n \in \NN} $ of elements of $X$  is a {\em Schauder  basis} 
 if, for any 
 $ f\in X$, there exists a unique  sequence of real numbers $ (a_n)_{n \in \NN} $ such that 
 \BE \label{convstr}
\sum_{n\leq N}  { a_n e_n} \rightarrow f \quad \mbox{ in} \quad  X.\EE 
The sequence $(e_n)_{n \in \NN} $ forms an  {\em  unconditional  basis} if convergence also  takes place after any permutation of the elements of the series. This is equivalent to the fact that  the series $\sum_{n} \varepsilon_n { a_n e_n}  $ converges in $E$ for any choice of signs $\varepsilon_n= \pm 1$; this is also   equivalent to the existence of a constant  $C >0$ such that for any finite subset $F \subseteq \NN $, any real numbers 
 $ (a_n)_{n \in F} $,  and  any choice of signs $\ep_n = \pm 1 $,  
\begin{equation}  \label{defunco} \left\| \sum_{n \in F} { \ep_n a_n e_n} \right\|_X  \leq C \left\| \sum_{n \in F} { a_n e_n} \right\|_X . \end{equation}
Unconditional convergence   insures the numerical stability of the reconstruction of  a function $f$ using linear combinations of the elements $e_n$. 
In  statistics, \eqref{defunco} is often referred to as the {\em multiplier property}. Note that this property is only one of the two ingredients for an unconditional basis and, in particular, it may hold in spaces that are not separable (in which case the first condition cannot be fulfilled); it is for example the case for the H\"older $C^\al$ spaces:  indeed one easily checks that smooth orthonormal wavelet  bases are unconditional weak$^*$ bases (this means that the requirement of strong convergence in \eqref{convstr} is replaced by a   weak$^*$  convergence, see  \cite{Heil}).

Such questions actually motivated the introduction of  wavelet bases: The first ones  were introduced  by J. O. Str\"omberg precisely in order to construct   bases which are simultaneously unconditional for the real Hardy spaces $H^p$  (if $p \leq 1$) and Lebesgue spaces $L^p$ (if $p > 1$), see  \cite{Strom}. 
In particular, the wavelet characterizations of the $L^p$ spaces for $p>1$ depend on the moduli of wavelet coefficients, and thus are  unaltered by multiplication by Rademacher random variables. Therefore the Paley-Zygmund theorem couldn't possibly hold for  wavelet series. This  shows another important difference between random Fourier and random wavelet series.

Since the H\"older, Sobolev and Besov norms can be characterized by a quantity bearing on the moduli of the wavelet coefficients,  a key consequence of the multiplier property for wavelet methods in statistics is that the shrinkage of wavelet coefficients in the expansion of a function does not introduce spurious singularities in the function, see e.g. \cite{DJKP95} and references therein.
Let us underline  that it  is very specific to wavelets and does not hold for other ``classical'' bases. In particular, in the periodic case, the famous Kahane-Katznelson-DeLeeuw theorem shows that given any function in $L^2$, one can construct a continuous function by increasing the modulus of the Fourier coefficients of $f$, see \cite{KKdL}.


Note that this result has been extended by F. L. Nazarov to expansions on fairly general orthonormal bases $\psi_n$ which however have to satisfy 
\[ \exists C >0, \quad \forall n, \qquad  \| \psi_n \|_1 \geq C,\]
a condition which is clearly not satisfied by wavelet bases, see \cite{Naza,Maurey}. 

The explicit example that will be worked out in Section \ref{seccont} will illustrate this strong difference between Fourier and wavelet series: We will compare these two   randomizations of the sawtooth function $\{ x \}$ (i.e. the fractional part of $x$). 
\\ 

A weaker condition than unconditional convergence has been introduced  in \cite{BKP85}. Instead of asking that the series converges for any choice of signs $\pm$, one requires only the {\em random unconditional convergence}: The Schauder basis $(e_n)_n \in \N$ is called a \emph{RUC system} in the space $X$ if, for every  $f = \sum_{n \in \N} a_n e_n $,  the convergence  the series also holds for almost every choice of signs $\varepsilon_n = \pm 1$, where  $(\varepsilon_n)_{n \in \N}$ denotes a sequence of independent  Rademacher random variables. The case $L^1$ is particularly interesting: it is well known that $L^1$ has no unconditional basis and moreover, it has no RUC system \cite{BKP85}; therefore, contrary to $L^p$ spaces, though wavelets form a Schauder basis of $L^1$,   this space  is not stable for randomization of wavelet coefficients by I.I.D. Rademacher random variables. 
\\

It is also natural to investigate the case where the Rademacher random variables are replaced by a sequence of I.I.D. random variables (either for Fourier series, or for bases in Banach spaces). Let us focus on the Gaussian case, which has been the subject of the most developed investigations.  As regards Fourier series,  if $X = C (\TT)$, then the result of  Marcus and Pisier (Theorem \ref{proppis}) states that one can reduce the study to the Rademacher case. 
The  situation for more general bases in a Banach space is more delicate.
The next result  gives a general criterion, see \cite{Kva}.

\begin{Theo}
Let $X$ be a Banach space and $(\xi_n)_{n \in \N}$ be a sequence of independent standard Gaussian random variables. The following assertions are equivalent : 
\begin{enumerate}
    \item For all sequence $(x_n)_{n \in \NN}$ of elements of $X$, the series $\sum_{n=0}^{\infty}  \xi_n x_n$ converges almost surely unconditionally if and only if the series $\sum_{n=0}^{\infty} x_n $ converges unconditionally in $X$.
    \item The space $X$  does not contain $\ell_{\infty}^n$ uniformly.
\end{enumerate}
\end{Theo}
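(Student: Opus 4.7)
The plan is to prove the equivalence by treating each direction separately, relying on three classical pillars: the Maurey--Pisier theorem relating the absence of $\ell_{\infty}^n$ uniformly to finite cotype, the comparison between Gaussian and Rademacher averages, and the It\^{o}--Nisio theorem identifying almost sure, in-probability, and unconditional convergence for symmetric series.

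\medskip
\noindent\textbf{Direction (2) $\Rightarrow$ (1).} Assume $X$ does not contain $\ell_\infty^n$ uniformly. By the Maurey--Pisier theorem, $X$ has finite cotype, which is equivalent to the existence of a constant $K=K(X)$ such that for every finite sequence $(y_k) \subset X$,
\[ K^{-1}\, \E \Bigl\|\sum_k \xi_k y_k\Bigr\| \;\leq\; \E \Bigl\|\sum_k \ep_k y_k\Bigr\| \;\leq\; K\, \E \Bigl\|\sum_k \xi_k y_k\Bigr\|, \]
where $(\ep_k)$ is an independent Rademacher sequence. The upper bound is elementary (condition on $|\xi_k|$, apply Jensen); the lower bound uses finite cotype. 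Because both $\sum \xi_n x_n$ and $\sum \ep_n x_n$ are series of independent symmetric $X$-valued variables, the It\^o--Nisio theorem asserts that a.s.\ convergence, convergence in probability, and a.s.\ boundedness of the partial sums are equivalent. Combined with the two-sided comparison above, this yields that $\sum \xi_n x_n$ converges a.s.\ if and only if $\sum \ep_n x_n$ converges a.s. Finally, by a Fubini-plus-contraction-principle argument, a.s.\ convergence of $\sum \ep_n x_n$ is equivalent to convergence of $\sum \ep_n x_n$ for \emph{every} choice of signs, i.e.\ to unconditional convergence of $\sum x_n$. The automatic upgrade from a.s.\ convergence to a.s.\ \emph{unconditional} convergence on the Gaussian side is obtained by exchangeability: a permutation of the terms produces a series with the same distribution.

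\medskip
\noindent\textbf{Direction (1) $\Rightarrow$ (2).} I argue by contrapositive. Assume $X$ contains $\ell_\infty^n$ uniformly: there is $C>0$ such that, for every integer $n$, one can find $e_1^{(n)},\dots,e_n^{(n)}\in X$ with
\[ \max_k |a_k| \;\leq\; \Bigl\|\sum_k a_k e_k^{(n)}\Bigr\| \;\leq\; C\max_k|a_k|. \]
I construct a counterexample by concatenating rescaled block copies. In the $m$-th block, take $n_m := 2^{2^m}$ vectors $x^{(m)}_k := \alpha_m e_k^{(n_m)}$, $1 \leq k \leq n_m$, with $\alpha_m$ to be chosen. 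Two observations drive the construction: (i) for Gaussians, $\E\max_{k\leq n_m}|\xi_k| \asymp \sqrt{\log n_m} = 2^{m/2}$, hence the expected norm of the Gaussian sum over block $m$ is at most $C\alpha_m 2^{m/2}$; (ii) for the trivial sign choice $\ep_k \equiv 1$, the block sum has norm at least $\alpha_m n_m = \alpha_m 2^{2^m}$. Choosing e.g.\ $\alpha_m = 2^{-m}$, the Gaussian series converges in $L^1(X)$, hence in probability, hence a.s.\ by It\^o--Nisio, while the deterministic series $\sum x_n$ has unbounded partial sums and therefore does not converge unconditionally. This breaks the equivalence asserted in (1).

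\medskip
\noindent\textbf{Main obstacle.} The delicate point is Direction (1) $\Rightarrow$ (2): one must choose the block lengths $n_m$ and the amplitudes $\alpha_m$ so that the logarithmic gain from Gaussian maxima compensates the linear loss from signed sums in $\ell_\infty^{n_m}$, while simultaneously ensuring the \emph{tail} of the Gaussian series (not just individual blocks) converges almost surely. Concentration of $\max_k|\xi_k|$ around $\sqrt{\log n}$ and a Borel--Cantelli or L\'evy maximal inequality over blocks are the natural tools here; the choice $n_m = 2^{2^m}$ makes the dyadic summation straightforward.
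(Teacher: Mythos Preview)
The paper does not prove this theorem; it is quoted from \cite{Kva} as background and no argument is given. So there is no ``paper's own proof'' to compare against, and I can only evaluate your argument on its own merits.

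There is a genuine gap in $(2)\Rightarrow(1)$. You write that ``a.s.\ convergence of $\sum \ep_n x_n$ is equivalent to convergence of $\sum \ep_n x_n$ for \emph{every} choice of signs, i.e.\ to unconditional convergence of $\sum x_n$.'' The implication from almost-sure Rademacher convergence to unconditional convergence is false already for $X=\RR$: with $x_n=1/n$, the series $\sum \ep_n/n$ converges almost surely (Kolmogorov, since $\sum 1/n^2<\infty$) while $\sum 1/n$ diverges. The same example shows that throughout your argument you have conflated ``$\sum\xi_n x_n$ converges a.s.'' with ``$\sum\xi_n x_n$ converges \emph{unconditionally} a.s.'', and only the latter appears in assertion~(1). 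These notions differ: still with $x_n=1/n$, $\sum\xi_n/n$ converges a.s.\ but $\sum|\xi_n|/n=\infty$ a.s., so the convergence is not unconditional. The Gaussian--Rademacher comparison under finite cotype transfers a.s.\ convergence, not a.s.\ unconditional convergence, so your chain of equivalences breaks and this direction has to be rebuilt around the correct hypothesis.

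The same error recurs in the ``exchangeability'' step you invoke twice: the fact that a permutation of a symmetric series has the same law shows only that \emph{each fixed} permutation converges on a full-measure set; you cannot intersect over the uncountably many permutations to deduce a.s.\ unconditional convergence (the example $x_n=1/n$ again blocks this). In your $(1)\Rightarrow(2)$ construction this matters because, to violate (1), you must exhibit a Gaussian series that converges \emph{unconditionally} a.s.\ while $\sum x_n$ does not. Your block example does in fact have this property---since a.s.\ $\max_{k\le n_m}|\xi_k|\le C\sqrt{\log n_m}$ for all large $m$ by Borel--Cantelli, each block admits the sign-independent bound $C\alpha_m\sqrt{\log n_m}$ and the tail is summable uniformly over all sign choices---but you should argue this directly rather than rely on the invalid exchangeability shortcut.
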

Let us recall that $X$ contains $\ell_{\infty}^n = (\mathbb{R}^n, \Vert \cdot \Vert_{\infty})$ uniformly if
$$
\sup_{n \in \mathbb{N}} \delta(X,\ell_{\infty}^n) < + \infty
$$
where $\delta$ is defined for  normed spaces  $E$ and $X$ such that $X$ contains at least one vector subspace isomorphic to $E$ by
$$
\delta(E,X) =\inf \{ d(E,F): \, \, F \subset X, \, F \, \text{isomorphic to } E\}
$$
and $d$ is the Banach-Mazur distance defined between isomorphic spaces $E, F$ as follows
$$
d(E,F)= \inf \{ \Vert T \Vert \Vert T^{-1} \Vert \, : \quad T : X \to Y \text{ isomorphism} \}. 
$$
Note that the spaces $L^p$ for $1\leq p<+\infty$ do not contain $\ell_{\infty}^n$ uniformly. Hence, for $1<p<+ \infty$, the unconditional convergence of the wavelet series implies the almost sure unconditional convergence for the randomized Gaussian expansion. At the opposite $C(\TT)$, $C^\al$  and $L^{\infty}$ contain $\ell_{\infty}^n$ uniformly, and this motivates our investigation of the differences between the convergences properties  of random Fourier and wavelet series in these specific settings.\\

  Concerning  Gaussian series in $L^1$, a contraction argument allows Ledoux and Talagrand \cite[Section 4.2]{LedouxTalagrand} to show that the average of Gaussian randomization always dominates the corresponding Rademacher average. 
In particular, if a Gaussian series $\sum_n \xi_n x_n$  in a Banach space $X$ converges, so does the corresponding Rademacher series $\sum_n \varepsilon_n x_n$. Together with the result of   \cite{BKP85} about RUC systems, it implies that the space $L^1$ is not closed under Gaussian randomization of the wavelet coefficients.

Non-Gaussian randomization has been  the subject of much  less  investigations. Let us however mention the following result of \cite{Kva} which gives a criterium for unconditional convergence.

\begin{Theo}
    Let $X$ be a Banach space that does not contain $\ell_{\infty}^n$ uniformly, $(\zeta_k)$ be a sequence of numerical variables for which 
    $$
    \sup_{k \in \mathbb{N}} \mathbb{E} \vert \zeta_k \vert^p \le c_p < +\infty \quad \forall p >0
    $$
    and $a_k \in X, k= 1,2,...$. If the series $\sum_k a_k$ converges unconditionnaly, then the series $\sum_k a_k \zeta_k$ converges almost surely unconditionnaly in the space $X$.  
\end{Theo}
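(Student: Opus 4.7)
The plan is to reduce to the Gaussian case---already handled by the preceding theorem---via a comparison/contraction argument; the hypothesis that $X$ does not contain $\ell_\infty^n$ uniformly will enter only through this reduction. First I would symmetrize: letting $(\zeta'_k)$ be an independent copy of $(\zeta_k)$ and setting $\tilde\zeta_k := \zeta_k-\zeta'_k$, the new sequence is symmetric and still satisfies $\sup_k \mathbb{E}|\tilde\zeta_k|^p\le \tilde c_p<+\infty$ for every $p>0$. Because $(\mathbb{E}\zeta_k)_k$ is a bounded scalar sequence and $\sum_k a_k$ is unconditional, the multiplier property yields unconditional convergence of $\sum_k (\mathbb{E}\zeta_k)\,a_k$, so it is enough to treat $(\tilde\zeta_k)$. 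Next, introducing an i.i.d.\ $N(0,1)$ sequence $(\xi_k)$ independent of everything else and applying the preceding theorem to the unconditional series $\sum_k a_k$, the Gaussian randomization $\sum_k \xi_k a_k$ converges almost surely unconditionally in $X$, and by the It\^o--Nisio theorem this convergence also holds in every $L^p(\Omega;X)$.

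The core step is a contraction/comparison argument matching $\sum_k \tilde\zeta_k a_k$ to $\sum_k \xi_k a_k$. Since $\tilde\zeta_k$ has the same law as $\varepsilon_k|\tilde\zeta_k|$ for an independent Rademacher $\varepsilon_k$, I would condition on $(|\tilde\zeta_k|)$ and invoke Kahane's contraction principle for Banach-valued Rademacher sums. To handle the unboundedness of $|\tilde\zeta_k|$, I would truncate dyadically,
$$
|\tilde\zeta_k|=\sum_{m\ge 0}|\tilde\zeta_k|\,\mathbf{1}_{\{2^m\le |\tilde\zeta_k|<2^{m+1}\}},
$$
apply the contraction principle at each scale, and sum the estimates using the tail bound $\mathbb{P}(|\tilde\zeta_k|\ge 2^m)\le \tilde c_p\, 2^{-pm}$, valid for every $p$. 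This produces an inequality of the form
$$
\sup_N \mathbb{E}\Bigl\|\sum_{k\le N}\delta_k \tilde\zeta_k a_k\Bigr\|^p \le C_p\, \sup_N \mathbb{E}\Bigl\|\sum_{k\le N}\xi_k a_k\Bigr\|^p,
$$
uniform over sign choices $\delta_k=\pm 1$. Combined with the L\'evy-type maximal inequality for symmetric independent $X$-valued sums, this yields the almost sure unconditional convergence of $\sum_k \tilde\zeta_k a_k$, and hence of $\sum_k \zeta_k a_k$.

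The main obstacle is precisely the unboundedness of $(\zeta_k)$: Kahane's contraction principle in its basic form requires uniformly bounded multipliers. The assumption that \emph{every} moment $\mathbb{E}|\zeta_k|^p$ is uniformly bounded is tailored to circumvent this, producing tails that decay faster than any polynomial and thereby making the dyadic summation converge; without the ``for every $p>0$'' quantifier the argument would break down and one would only recover convergence in probability rather than almost sure unconditional convergence.
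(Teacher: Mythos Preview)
The paper does not prove this theorem: it is quoted from \cite{Kva} as background, with no argument supplied. So there is no ``paper's proof'' to compare against, and your attempt must stand on its own.

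Your overall architecture is sensible: symmetrize, invoke the preceding theorem to get a.s.\ unconditional convergence of the Gaussian randomization $\sum_k \xi_k a_k$, and then try to dominate $\sum_k \tilde\zeta_k a_k$ by the Gaussian series. The symmetrization step and the passage from $L^p$ bounds (uniform over signs) to a.s.\ unconditional convergence via L\'evy/It\^o--Nisio are fine.

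The gap is in the core comparison. Carrying out your dyadic truncation and Kahane contraction honestly, at level $m$ one gets
\[
\Bigl(\mathbb{E}\Bigl\|\sum_{k\le N}\varepsilon_k\,|\tilde\zeta_k|\,\mathbf{1}_{\{2^m\le |\tilde\zeta_k|<2^{m+1}\}}\,a_k\Bigr\|^p\Bigr)^{1/p}
\;\le\; 2^{m+1}\Bigl(\mathbb{E}\Bigl\|\sum_{k\le N}\varepsilon_k\,\mathbf{1}_{\{|\tilde\zeta_k|\ge 2^m\}}\,a_k\Bigr\|^p\Bigr)^{1/p}.
\]
Contraction now only gives the crude bound by $(\mathbb{E}\|\sum_{k\le N}\varepsilon_k a_k\|^p)^{1/p}$, and then $\sum_m 2^{m+1}$ diverges. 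The only way to feed in the tail estimate $\mathbb{P}(|\tilde\zeta_k|\ge 2^m)\le \tilde c_q 2^{-qm}$ is through the event $\{\exists\,k\le N:|\tilde\zeta_k|\ge 2^m\}$, and any H\"older/union-bound manoeuvre here produces a factor $N^{1/r}$ for some $r$, so the resulting estimate is \emph{not} uniform in $N$. In other words, the inequality you assert,
\[
\sup_N \mathbb{E}\Bigl\|\sum_{k\le N}\delta_k\tilde\zeta_k a_k\Bigr\|^p \;\le\; C_p\,\sup_N \mathbb{E}\Bigl\|\sum_{k\le N}\xi_k a_k\Bigr\|^p,
\]
does not follow from the mechanism you describe. (The obstruction is genuine: in $\ell_\infty^N$ with $a_k=e_k$, thinning the Rademacher sum by indicators of small probability does not shrink the norm, which is exactly why the hypothesis ``$X$ does not contain $\ell_\infty^n$ uniformly'' must enter \emph{inside} the comparison, not only upstream through the Gaussian equivalence.)

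What is actually needed is to use finite cotype directly in the comparison step --- e.g.\ the fact that in a cotype-$q$ space one has a uniform domination of independent symmetric sums with uniformly bounded $q$-th moments by the corresponding Rademacher sum --- rather than routing everything through Kahane contraction and tail probabilities alone. Your sketch identifies the right hypotheses but does not supply this missing ingredient.
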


 With this context in mind, our paper will deal with the study of the boundedness,  continuity and H\"older regularity of random wavelet series. In Section \ref{sec2}, we give a  natural condition for the continuity of a function in terms of its wavelet coefficients. We moreover prove that this condition is optimal. Section \ref{sec3} is dedicated to the study of nowhere bounded random wavelet series. In particular, we prove that the conclusion of  Theorem \ref{proppis} widely differ for wavelet series:  Proposition \ref{contreex} will supply  examples of  wavelet series with Rademacher coefficients which  almost surely has continuous sample paths, whereas the same  series,  with standard Gaussian coefficients instead is nowhere locally bounded (we will actually prove that Rademacher and Gaussian random variables play no specific role in the wavelet case, but that the dichotomy actually is between bounded vs. unbounded random variables). Moreover, we prove that, far from being exceptional, this behavior is ``generic'' in the space of continuous function. 
 We  consider   in Section \ref{sec5} a minimal regularity condition implying the continuity of the randomized wavelet series. We finally  study in Section \ref{seccont}   the impact of a randomization of wavelet coefficients on the H\"older modulus of continuity of the function and   
we propose an explicit example on which we will compare the consequences of the randomization of Fourier series vs. wavelet series. 

 \section{Continuity of wavelet series}\label{sec2}

 In this section, we give a simple condition in terms of the  wavelet coefficients of a function that implies the continuity of this function. Furthermore, we show that this condition is  optimal. Let us start by recalling the definition of a $d$-variables wavelet basis \cite{Dau92,Mey90I,Mallat1998}.

 \BD  \label{defwavsyst}  An orthonormal $r$-smooth ($r \geq 0$) wavelet basis on $\RR^d$ is a collection of   functions  $\varphi$ and $\Psi^{(i)}$,    $ i \in \{ 1, \dots, 2^d -1 \} $,  with the following properties:\begin{itemize}
\item $\varphi$ and the $\Psi^{(i)}$ and all their  partial derivatives  up to the order $r$   have fast decay, 
\item The $\varphi (\cdot -k )$,   $k \in \ZZ^d$  together with the functions 
 $  2^{dj/2} \Psi^{(i)}  (2^j \cdot -k)$, $j \in \NN, \; k \in \ZZ^d$, $i  \in \{ 1, \dots, 2^d -1\}$,  
 form an orthonormal basis of $\RR^d$. 
\end{itemize}
  \ED  

  For example, the Haar system is a $0$-smooth wavelet basis. We will usually not mention the smoothness required for the wavelet basis, assuming that it is ``large enough''; the precise value required can always be easily tracked.   We will mostly work on the $d$-dimensional torus $ \TT = \RR^d / \ZZ^d$, in order to draw tighter comparisons with Fourier series. Note however that the results we obtain extend easily to the non-periodic setting (in which case they have to be understood as local results).  In that case  {\em periodized wavelet bases} are used; they  are supplied by the union of the constant function $1_{\TT} $, and the 1-periodic wavelets $\ptjk$ defined on $\RR$ by 
  \[  \ptjk (x) : = \sum_{l \in \ZZ^d}  \Psi^{(i)}  (2^j (x-l) -k) ,    \quad  i  \in \{ 1, \dots, 2^d -1\},  \;  j \in \NN, \; k  \cdot 2^{-j} \in  [0,1)^d . \] 
   We will use the following notations   for wavelets: 
 \[  \varphi_k = \varphi (\cdot - k), \quad  \pjk =  \Psi^{(i)}  (2^j \cdot -k), \quad \ptjk =  {\psi}^{(i)}  (2^j \cdot -k)  , \]
  and by $c_k$ and $c^{i}_{j,k}  $ the corresponding  wavelet  coefficients (the context will tell without ambiguity if the wavelet coefficients are computed in the periodic or non-periodic setting).  Note that we do note use the $L^2$ normalization for wavelets and wavelet coefficients; indeed the convention we use will lead to simpler formulations when dealing with boundedness, continuity, or H\"older regularity questions.  
  
  Let us first recall two straightforward  results.
  
  \BP \label{propcritqsimple}  Let $f \in L^\infty (\TT) $ and  denote the wavelet coefficients of $f$ in a fixed $0$-smooth wavelet basis by $c^{i}_{j,k}$. Then, the sequence  
  $(\ome_j)_{j \in \NN}$ defined by
  \BE \label{omej} \ome_j = \sup_{i,k} | c^{i}_{j,k} |  \EE 
  belongs to $\ell^{\infty}$.
   Additionally, if $f$ is continuous then $(\ome_j)_{j \in \NN}$ belongs to $c_0$.
  \EP
  The optimality  of the first statement is shown (in the non-periodic case) by the (slight variant of the) Heaviside function 
 \BE \label{heavi1}
 H (x) = 1_{\RR^+} (x) - 1_{\RR^-} (x) , 
 \EE
   which clearly   satisfies 
   \BE \label{heavi2}  c_{j,k} = c_{j+1,k} . \EE 
   Recall that the {\em  cone of influence} of width $C$ at $x_0$  is defined as 
   \[ { \mathcal C} (x_0, C) = \{ (j,k): \quad | 2^j x_0 -k| \leq C\} . \] 
   Hence,  \eqref{heavi2} shows that   the  wavelet coefficients in the  cones of influence at 0  do not decay (except in the case of the Haar wavelet, where they all vanish!). Adapting this counter-example to the periodic and $d$-dimensional settings is straightforward.


Since the space ${ \cal C}  (\TT^d) $ (composed of continuous bounded functions on the $d$-dimensional torus)  has no unconditional basis, and since wavelets form a Schauder basis of this space, it follows that the multiplier property does not hold here.

 Nonetheless the following result supplies a simple and optimal condition implying continuity. 

\BP \label{prop1}  Let $c^{i}_{j,k}$ be a collection of coefficients with  $i  \in \{ 1, \dots, 2^d -1\},$    $  j \geq 0, $ and $ k \cdot 2^{-j} \in  [0,1)^d $   and assume that the wavelets used are 0-smooth. 
\begin{itemize}
\item If the  sequence $(\ome_j)_{j \in \N}$ defined by \eqref{omej} belongs to $\ell^1$,  then  the wavelet series 
\begin{equation} \label{wavser} f=  \sum_j f_j \quad \mbox{ where} \quad f_j = \sum_{i,k}   c^{i}_{j,k} \psi^{(i)}_{j,k}  ,   \end{equation} 
is normally convergent, so that it  defines a bounded function. If  additionally   the wavelets are continuous, then $f$ 
is  also continuous.
\item 
This condition is optimal, i.e. if   $(\ome_j)_{j \in \N}$ is a non-negative sequence such that $\sum_{j \in \N} \ome_j  =  \infty $  and if the wavelets are piecewise continuous, then there exists a wavelet series \eqref{wavser}  such that 
\BE \label{eq:om}
\forall j, \quad \sup_{i,k} | c^{i}_{j,k} |  = \ome_j  , \EE
 and  which is  is nowhere locally bounded on $\TT^d $.
 \end{itemize}
\EP

\begin{rem} 
Recall that a  Sidon set of integers is a set $E\subset \ZZ$ such that, if
\[ \sum_{n \in E} c_n e^{inx} \]
is continuous, then $\sum_{n \in E} | c_n | < \infty$, see \cite{Rud}.  The determination of Sidon sets  has a long and rich history in harmonic analysis. One can see the quest for Sidon sets as understanding which lacunarity condition on a Fourier series implies that the ``natural condition '' of absolute convergence turns out to be necessary. Therefore the converse part of our proposition can be interpreted as a ``wavelet variant'' of this quest. More precisely, the natural counterpart of the Sidon sets question in the wavelet setting would be to determine sets of indices ${ \mathcal I}$  such that 
\[ \sum_{(i,j,k)  \in { \mathcal I} }  c^{i}_{j,k}  \psi^{(i)}_{j,k}  \in L^\infty \Longrightarrow  \sum_{(i,j,k)  \in { \mathcal I} }  |c^{i}_{j,k}| | \psi^{(i)}_{j,k} | \in L^\infty . \] 
\end{rem}


\begin{proof}[Proof of the first point] The direct sense of the implication is straightforward: Indeed, 
the fast decay of the wavelets implies that, if the wavelets are continuous, then the $f_j$ are continuous. In all cases,  one has
\[ \left| f_j (x) \right|  \leq C \ome_j \quad \forall j \in \NN .   \] 
It follows that the series  of functions $\sum_j f_j $ is a series of bounded  functions which converges normally, so that its sum is bounded. Additionally,  if the wavelets are continuous, then the $f_j$  are continuous (because of the fast decay of the wavelets)  and the sum of the series is  a continuous function.  
\end{proof}

In order to prove the converse result, we note that 
we can assume that 
\BE \exists C \; \forall i, j, k , \quad | c^{i}_{j,k} | \leq C  \EE
since Proposition \ref{propcritqsimple}  states  that if  $f \in L^\infty$, then this condition is satisfied.  
We will use the following lemma.

\BL \label{Elem1} If   $\ome_j$ is a nonnegative sequence such that $\sum_{j \in \NN} \ome_j  =  \infty $, then  there exists  a subsequence $(j_n)_{n \in \N}$ such that 
\BE \label{lem1}  j_{n+1} -j_n  \rightarrow + \infty \quad \mbox{ and } \quad  \sum_{n \in \N} \ome_{j_n}   =  \infty. \EE 
\EL 

\begin{proof} The result is immediate if  we fix an $N$ and  we only require that $ j_{n+1} -j_n = N $ (indeed, one of the $N$ sequences which  satisfy this property necessarily satisfies $ \sum_n \ome_{j_n}   =  \infty$).  In order to obtain the subsequence such that \eqref{lem1} holds, one starts by taking element of the sequence  $j_n$  thus obtained for $N =2$ until the corresponding sum is larger that 1, then one takes elements for $N = 3$  until the corresponding sum is larger that 1, and so on.  \\ 
\end{proof}

We now come back to the proof of the optimality  of the proposition.

\begin{proof}[Proof of the second point]
First we construct a wavelet series satisfying \eqref{eq:om} which does not belong to  $L^{\infty}(\TT^d)$: The result is clear if the $\ome_j$ are unbounded, therefore we can assume that the sequence $\ome_j$ belongs to $l^\infty$.  At each scale $j$, we consider only one nonvanishing coefficient defined by   
\[  c^{1}_{j,k}  = \ome_j.  \] 
The localization of these coefficients is chosen as follows. First, if $j $ is not one of the $j_n$, we locate these coefficients at a $k= (k_1, \cdots k_d) $ such that $\forall m $, $ k_m \cdot 2^{-j}  = 3/4$. 
Secondly, for the localization at scales $j_n$, $n \in \N$,  we notice that the assumption of piecewise continuity of the wavelets implies  that there exists $C >0$ and a dyadic cube $K$   such that 
\[  \psi^{(1)} (x) \geq C , \quad \forall x \in K.  \] 
We denote by $K_{j,k}$ the dyadic cube 
\[ K_{j,k} = \{ x : \, 2^j x-k \in K \}. \] 
If  $Q$ is a dyadic cube, we denote by $Q/2$ the cube with same center, same orientation, and width $| Q/2 |= 1/2 |Q|$. 
With these notations, if  $j $ is  one of the $j_n$, the localization of the nonvanishing coefficients is given by the first couple $(j_n,k_n)$  so that the corresponding cube $K_{j_n,k_n} $ is included in $[1/8, 3/8 ]^d$, and the following ones so that  
\BE \label{inclus} K_{j_{n+1},k_{n+1} } \subset 1/2 \cdot K_{j_n,k_n} \EE  (the condition $j_{n+1} -j_n  \rightarrow + \infty$ implies that this is possible).  

We now consider the wavelet series \[ f=  \sum_{j}  c^{1}_{j,k} \pjkun     \]  thus constructed. 
Since the $c^{1}_{j,k}$ are bounded, and only one does not vanish at each scale, this function belongs to $L^2$. Furthermore, because of the fast decay of the wavelets, the contribution of the $j$  that do not belong to the sequence $j_n$ is bounded on the cube $[1/8, 3/8 ]^d$, and we can forget it.  We now focus on the values of the remaining term 
\BE\label{gpart} g =  \sum_{n}  c^{1}_{j_n,k_n} \psi^{(1)}_{j_n,k_n}    \EE
on the  cubes $K_{j_l,k_l} $. If $x \in K_{j_l,k_l}$, we have
\[    c^{1}_{j_n,k_n} \psi^{(1)}_{j_n,k_n} (x) \geq  C \ome_{j_n} \]
for every $n \leq l$. 
First assume that the wavelets are compactly supported. For $n > l$, it follows from \eqref{inclus} and the condition $j_{n+1} -j_n  \rightarrow + \infty$ that the support of the wavelet $\psi^{(1)}_{j_n,k_n} $ is included in $K_{j_l,k_l} $. Since wavelets have a vanishing integral, it follows that the average  value of $g$   on  $K_{j_l,k_l} $ is larger than 
\BE \label{valmoy} C \sum_{ n=1}^l   \ome_{j_n}.\EE
Since these quantities are not bounded, it follows that $g \notin L^\infty$. 

Let us now only assume that the wavelets have fast decay. Then the average value of  the wavelet $\psi^{(1)}_{j_n,k_n} $  on  $K_{j_l,k_l} $ with $n >l$ does not exactly vanish. However, using again that wavelets have a vanishing integral, it is bounded by  
\[ \int_{ x \notin K_{j_l,k_l}} |\psi^{(1)}_{j_n,k_n} (x) |dx,  \]
and  the conditions $j_{n+1} -j_n  \rightarrow + \infty$  and \eqref{inclus} imply that these quantities are bounded $C /(n +l)^2$;  therefore \eqref{valmoy} is only modified by a bounded  error term, and the same conclusion holds. 

Hence, we have obtained that  the function $f$ can be written as $f = g+h$ where 
$$
h= \sum_{j \neq j_n} c^1_{j,k} \psi^{(1)}_{j,k}
$$
and $g$ given by \eqref{gpart}
are bounded everywhere on $\TT^d$ except at at most one point for $h$ and one point for $g$. We define 
$$
g_l = \sum_{n \ge l+1} c^1_{j_n,k_n}\psi^{(1)}_{j_n,k_n}.
$$
Then
$$
G(x) = \sum_{l \ge 0} \sum_{k \notin (2 \mathbb{Z})^d} g_l(x-2^{-l}k+2^{j_l}k_l)
$$
is nowhere locally bounded and satisfy $\omega_j(G)= \omega_j(g)$, where $\omega_j(\phi)$ is defined by $\omega_j(\phi) = \sup_{i,k} \vert c^i_{j,k}\vert $ for a wavelet serie $\phi= \sum_{i,j,k} c^i_{j,k} \psi^{(i)}_{j,k}$. Finally $F = G+h$ remains nowhere locally bounded and satisfy $\omega_j(F)= \omega_j$.
 \end{proof}
 
\section{Nowhere locally  bounded  random wavelet series } \label{sec3}

As already mentioned, an important feature of  randomization of Fourier coefficients is a smoothing effect. This section aims at showing that the 
  conclusion can go in the opposite direction for wavelet series: We will construct a bounded function such that, after an unbounded randomization of its wavelet coefficients, the sample paths of the stochastic process thus obtained  a.s. are nowhere locally bounded. Let us be more precise; 
we consider general {\em  random wavelet series} i.e. stochastic processes of the form 
\BE \label{RWS}  X =  \sum_{i,j,k}  c^{i}_{j,k} \chi^{i}_{j,k} \ptjk ,  \EE 
where the   $\chi^{i}_{j,k}$ are I.I.D.  random variables of common law $\chi$.  Our purpose is to compare the continuity and boundedness properties of the series 
\BE \label{NorConv}   f =  \sum_{i,j,k}  c^{i}_{j,k}  \ptjk   \EE 
and of its randomization \eqref{RWS}. We assume in this section that the series \eqref{NorConv} is {\em normally convergent} considered as a series in $j$ only (with a slight abuse of language), i.e.  that 
\[ \sum_j \sup_x \left( \sum_{i,k}  | c^{i}_{j,k}|  | \ptjk   (x) | \right)  < \infty  .  \]
Such a function clearly is continuous  if the wavelets are continuous (which we assume in this section) and,  if $\chi$ is bounded,  then its randomization \eqref{RWS} also  is normally convergent and therefore it represents a stochastic process with continuous sample paths. The purpose of this section is to prove that this result may fail if the random variable $\chi$ is  not bounded.

  We start by a simple remark  on unbounded random variables.
  First, note that 
  \[ \forall n, \qquad \PP ( | \chi | \geq n^3 ) >0, \]
  so that there exists an increasing sequence $(j_n)_{n \in \N}$ such that 
  \BE \label{proban3} \PP ( | \chi | \geq n^3 ) \geq 2^{-dj_n}.\EE
  Therefore, since there are $(2^d-1) 2^{dj_n}$ wavelets at scale $j_n$,  
  \[ \sum_n \sum_{i,k} \PP ( | \chi^{i}_{j_n, k} | \geq n^3 )  = + \infty , \]
and the Borel-Cantelli Lemma yields that a.s. there exists an infinite number of  $| \chi^{i}_{j_n, k} | $  larger than $n^3$.  We refer to $(j_n)_{n \in \N}$ as a {\em divergence sequence} associated with the law $\chi$.

We can now state the main result of this section,  which gives  an  explicit example of a  continuous function such that its randomized wavelet coefficients yield a nowhere locally bounded stochastic process if the random variable $\chi$ is unbounded.

 \BP \label{contreex}
  Let $\chi$ be an unbounded random variable, and let $(j_n)_{n \in \N} $ be a divergence sequence associated with $\chi$. Then  
\[
f = \sum_{n\in \N} \frac{1}{n^{2}}   \sum_{i, k}  {\psi}^{(i)}_{j_n ,k} 
\]
is a normally convergent wavelet series (and therefore  $f$ is continuous). However  its 
  randomization
\[
X_f = \sum_{n\in \N} \frac{1}{n^{2}}  \sum_{i, k}  \chi^{i}_{j_n, k}{\psi}^{(i)}_{j_n ,k}
\]
is a.s. nowhere locally bounded, where the   $\chi^{i}_{j,k}$ are I.I.D.  random variables of common law $\chi$.
\EP

\begin{rem} The result is stated in the setting of periodized wavelets, but it translates immediately in the setting of standard wavelets on $\RR^d$. 
\end{rem}

\begin{rem} Without any assumption on $\chi$ the series $X_f$ is a formal series. However, if we assume that $\chi$ has a finite variance, then $\EEE (\| X_f\|^2)  < \infty $, so that  the sample paths of $X_f$ belong to $L^2$ almost surely. The proof of continuity also yields that, if the $\chi^{i}_{j_n, k}$ are bounded, then the randomized wavelet series   has continuous sample path. A direct consequence of this theorem therefore is that the Marcus and Pisier theorem does not hold for random wavelet series and, in that case, its failure is related only with the boundedness of the random variable $\chi$ and not its particular nature (Rademacher, Gaussian, ...).  \end{rem}

\begin{proof}
The first statement follows from Proposition \ref{prop1}. 
As regards the second statement, as a consequence of Lemma \ref{Elem1}, it follows that a.s. for $j$ large enough, one of the $| \chi^{i}_{j_n,k}|$  
is larger than $n^3$, so that the wavelet coefficients of $X_f$ are not bounded. Using Proposition  \ref{propcritqsimple}, it implies that $X_f$  does no belong to $L^\infty$. In order to obtain the more precise result that $X_f$ is not locally bounded, we note that the argument which gave the divergence sequence  can be strengthened as follows: Let us pick a dyadic cube $\la \subset [0,1]^d$; the same argument yields that, if we consider the  dyadic cubes included in $\la$, then a.s. there exists  an infinite number of   $\chi^{i}_{j_n,k_n}$ corresponding to  indices $(j_n,k_n)$ such that $k 2^{-j_n}  \subset \la$  and which satisfy $| \chi^{i}_{j_n, k} | \geq n^3$.  This is true for all dyadic subcubes of  $[0,1]^d$, which form a countable set. It follows that $X_f$ actually is nowhere locally bounded.
\end{proof}

Let us now prove that the behavior of this example is not
pathological, but that, on the opposite, it is generic  in the space $C(\mathbb{T}^d)$  using the notion of prevalence.
The notion of prevalence supplies an extension of the notion of ``almost everywhere'' (for the  Lebesgue measure)  in an infinite dimensional setting. In
a metric infinite dimensional vector space,  no measure  is both  $\sigma$-finite  and translation invariant. However, a natural notion of ``almost everywhere'' which is translation invariant  can be defined as follows,  see  \cite{Christ,HSY}.

\BD\label{def:prev}
Let E be a complete metric vector space. A Borel set $A\subset E$ is  {Haar-null} if there exists a compactly supported probability measure $\mu$  
such that
$$
\forall x\in E,\quad \mu(x+A)=0. 
$$
  If this property holds, the measure $\mu$ is said to be {transverse}
  to $A$. A subset of $E$ is called {Haar-null} if it is contained in a Haar-null Borel set. The complement of a Haar-null set is called a {prevalent set}.
\ED

If $E$ is a function space, one can often use for transverse measure $\mu$
the law of a stochastic process, see e.g. \cite{Fraysse,EsserJaffard,EsserLoosveldt} for some
applications of this method.
In this context, a way
 to check that  a property
$\mathcal{P}$ holds  only on a Haar-null set  is to  exhibit  a random process $X$ whose  sample paths lies in a compact subset of $E$
and  such that for all $f \in E$, almost surely the property
$\mathcal{P}$ does not hold for $X+f$. 


\begin{Theo}
Let $(\chi^{i}_{j,k})_{j,k}$ be a  sequence of independent
I.I.D. unbounded random variables of common law $\chi$. 
For almost every function $f$ in $C(\TT^d)$, the associated randomized wavelet series is almost surely nowhere locally bounded. 
\end{Theo}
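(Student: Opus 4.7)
The plan is to exhibit an explicit transverse measure. Let $g = \sum_n n^{-2} \sum_{i,k} \psi^{(i)}_{j_n,k}$ be the continuous function constructed in Proposition~\ref{contreex}, and take $\mu$ to be the law of the random element $tg \in C(\TT^d)$ where $t$ is uniformly distributed on $[0,1]$. Since $t\mapsto tg$ is a continuous injection of $[0,1]$ into $C(\TT^d)$, the pushforward $\mu$ is a Borel probability measure supported on the compact set $\{tg:\, t\in[0,1]\}$, so $\mu$ satisfies the requirements for a transverse measure.

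Let $B \subset C(\TT^d)$ be the set of ``bad'' functions $h$ for which $X_h$ fails to be almost surely nowhere locally bounded. Haar-nullity of $B$ is equivalent to $\mu(\phi + B)=0$ for every $\phi \in C(\TT^d)$; after the cosmetic change $\phi \mapsto -\phi$, this reduces to showing that for every fixed $\phi$, Lebesgue-almost every $t \in [0,1]$ satisfies the property that $X_{\phi+tg}$ is almost surely nowhere locally bounded.

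The key observation is the linearity of the construction $h \mapsto X_h$: one has $X_{\phi+tg}=X_\phi + tX_g$ as (formal) wavelet series. Consequently, if two distinct values $t_1, t_2 \in [0,1]$ both yielded $X_{\phi+t_ig}$ bounded on the same dyadic cube $\lambda \subset [0,1]^d$, then their difference $(t_1-t_2)X_g$ would be bounded on $\lambda$, forcing $X_g$ to be bounded on $\lambda$. Proposition~\ref{contreex} applied to $g$ guarantees that almost surely $X_g$ is nowhere locally bounded, hence unbounded on every dyadic subcube of $[0,1]^d$. On this full-probability event, for each dyadic $\lambda$ at most one $t$ makes $X_{\phi+tg}$ bounded on $\lambda$; since the family of dyadic subcubes of $[0,1]^d$ is countable, the exceptional set of $t$'s (namely those for which $X_{\phi+tg}$ is locally bounded somewhere, equivalently bounded on some dyadic $\lambda$) is almost surely at most countable, and therefore Lebesgue-negligible.

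A Fubini argument then interchanges ``almost surely, for Lebesgue-a.e. $t$'' into ``for Lebesgue-a.e. $t$, almost surely,'' which is exactly the statement needed. The main obstacle is technical rather than conceptual: one must verify the joint measurability in $(t,\omega)$ required for Fubini. This reduces to showing that ``$X$ is nowhere locally bounded'' is a measurable property, which follows from writing it as the countable intersection $\bigcap_\lambda \{X \text{ unbounded on } \lambda\}$ over dyadic cubes, together with giving a precise pointwise meaning to ``bounded on $\lambda$'' for the formal wavelet series $X_{\phi+tg}$; the latter is standard under a finite-variance hypothesis on $\chi$ (as in the remark following Proposition~\ref{contreex}), and can otherwise be phrased in terms of (local) wavelet-coefficient criteria in the spirit of Proposition~\ref{propcritqsimple}.
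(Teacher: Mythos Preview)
Your argument is correct and takes a genuinely different route from the paper. The paper does \emph{not} use the one–dimensional probe $t\mapsto tg$; instead it builds an infinite–dimensional transverse measure by randomizing the coefficients of $g$ with independent Rademacher signs, and then, for each fixed $f$, runs two nested Borel--Cantelli arguments (first on the Rademacher space, then on the $\chi$–space) to show directly that the wavelet coefficients of the randomized series $X_{X+f}$ are unbounded inside every dyadic cube. Your approach is more economical: you reuse Proposition~\ref{contreex} as a black box, exploit the linearity $X_{\phi+tg}=X_\phi+tX_g$ (same realization of the $\chi^i_{j,k}$ on both sides), and observe that on the almost sure event $\{X_g \text{ nowhere locally bounded}\}$ the affine family $t\mapsto X_\phi+tX_g$ can be bounded on a given dyadic cube for at most one value of $t$. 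This yields a one–dimensional probe, which is the simplest possible transverse measure and makes the prevalence conclusion essentially immediate after Fubini.

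The trade–off is exactly the one you flag at the end. The paper's coefficient–level argument never needs the randomized series to define an honest function: unboundedness of wavelet coefficients inside each dyadic cube is a purely measurable statement about countably many random variables, so no moment hypothesis on $\chi$ is needed and the product–measurability for Fubini is automatic. Your subtraction argument, by contrast, requires a meaning for ``$X_{\phi+tg}$ bounded on $\lambda$'' that is stable under differences; this is clean when $\chi$ has finite variance (everything lives in $L^2$, the set $\{h\in L^2:\|h\|_{L^\infty(\lambda)}<\infty\}$ is Borel, and the map $(t,\omega)\mapsto X_\phi(\omega)+tX_g(\omega)\in L^2$ is jointly measurable), but in the general unbounded case your appeal to ``local wavelet–coefficient criteria in the spirit of Proposition~\ref{propcritqsimple}'' would need to be made precise, since that proposition gives only a necessary condition. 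This is a genuine technical point to tidy up if you want the full generality of the theorem, though it does not affect the Gaussian (or any finite–variance) case.
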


\begin{proof}
We assume  that $d=1$ (the proof for $d >1$ is similar). 
Let $(\Omega,
\mathcal{F}, \mathbb{P})$ denote the probability space on which the random variables $(\chi_{j,k})_{j,k}$  are defined. Let us consider an increasing sequence $(j_n)_n$ such that 
  \BE \label{proban3bis} \PP ( | \chi | \geq n^3 ) \geq j_n2^{- j_n}.\EE
Let $(\varepsilon_{j_n,k})_{j_n,k}$ be a sequence of independent Rademacher random variables on a second probability
space $(\Omega',\mathcal{F}', \mathbb{P}')$ and  let $X$ be the stochastic
process defined on   $(\Omega',\mathcal{F}', \mathbb{P}')$ by
$$
X = \sum_{n \in \N}\sum_{k=0}^{2^j-1}
\frac{\varepsilon_{j_n,k}}{n^{2}} \psi_{2^{j_n},k}.
$$
As $|\varepsilon_{n}|=1$, we get as previously that the sample paths
of $X$ define a pointwise bounded subset of $C(\TT)$ and moreover, that  this subset is
equicontinuous.  The Arzela-Ascoli theorem implies that the sample paths
of $X$ are included in a compact subset of $C(\TT)$.

Let $f$ be an arbitrary function of $C(\TT)$ and 
denote by $(c_{j,k})_{j,k}$ its sequence of wavelet coefficients. Let us prove that $\mathbb{P}'$-almost surely, the random wavelet series
$$
\sum_{n \in \N}\sum_{k=0}^{2^{j_n}-1}
(\frac{\varepsilon_{j_n,k}}{n^{2}} +c_{j_n,k}) \chi_{j_n,k}\psi_{2^{j_n},k}  + \sum_{j \neq j_n}\sum_{k=0}^{2^j-1} c_{j,k} \chi_{j,k}\psi_{j,k}
$$
associated to 
$X+f$ is $\mathbb{P}$-almost surely nowhere locally
bounded by showing that  this random wavelet series is $\mathbb{P}$-almost surely not bounded on any dyadic interval $\lambda \subseteq [0,1]$. As in the proof of Proposition \ref{contreex}, it suffices to work on $[0,1]$.  
For every $n$, we consider the disjoint subsets of $\{0, \dots, 2^{j_n}-1 \}$ defined by
$$\Lambda_{n,l} = \{l2j_n, \dots, (l+1)2j_n -1 \}$$
for $l \in \{0, \dots, 2^{j_n}/2j_n\}$. Notice that for every $n$, the inequality
$$
\left|\frac{\varepsilon_{j_n,k}}{n^{2}} +c_{j_n,k}\right| < \frac{1}{n^2}
$$
can only happen if $\varepsilon_{j_n,k}$ is the nearest integer  to $n^2c_{j_n,k}$. Since $\varepsilon_{j_n,k}$ can only takes the values $1$ and $-1$, the probability of this event is bounded by $1/2$. Hence, we have
$$
\sum_{n \in \N} \sum_{l=0}^{2^{j_n}/2j_n} \widetilde{\mathbb{P}}\left( \forall k \in \Lambda_{n,l}\, , \, \left|\frac{\varepsilon_{j_n,k}}{n^{2}} +c_{j_n,k}\right| < \frac{1}{n^2}\right) \leq \sum_{n \in \N}\frac{2^{j_n}}{2j_n}2^{- 2j_n}< + \infty
$$
and the Borel-Cantelli lemma implies that $\widetilde{\mathbb{P}}$-almost surely, for all $n$ large enough and all $l \in \{0, \dots, 2^{j_n}/2j_n\}$, there exists $k_{n,l}\in \Lambda_{n,l}$ such that 
$$
\left|\frac{\varepsilon_{j_n,k_{n,l}}}{n^{2}} +c_{j_n,k_{n,l}}\right| \geq \frac{1}{n^2}.
$$
It follows from \eqref{proban3bis} that
$$
\sum_n \sum_l \PP(|\chi_{j_n,k_{n,l}}| \ge n^3) = \infty.
$$
Applying  the Borel-Cantelli Lemma on the space $(\Omega, \mathcal{F}, \mathbb{P})$, we get that a.s. there are infinitely many  $\chi_{j_n,k_{n,l}}$ larger than $n^3$, hence infinitely many values of $n$ such that 
$$
\left|\frac{\varepsilon_{j_n,k_{n,l}}}{n^{2}} +c_{j_n,k_{n,l}}\right| |\chi_{j_n,k_{n,l}}| \geq n. 
$$
\end{proof}

\section{Conditions implying the continuity of Gaussian wavelet randomization} \label{sec5}

We have shown in Proposition \ref{prop1} that if function $f$ satisfies that the sequence $(\omega_j)_{j \in \N}$ defined by \eqref{omej} belongs to $ \ell^1$, then  $f$ is continuous. We have moreover obtained that one cannot expect a less restrictive condition. In Proposition \ref{contreex}, we have shown that the condition $(\omega_j)_{j \in \N} \in \ell^1$ is not sufficient to insure that the randomized version of $f$ belongs to $L^\infty$. The aim of this section is to  determine a minimal regularity condition implying the continuity of Gaussian wavelet randomization, and to study its optimality. 

First, let us recall a classical result about the asymptotic behavior of a sequence of independent standard Gaussian random variable, which is direct consequence of the Borel-Cantelli Lemma  together with the inequality
\[  \PP ( | \chi | \geq x ) \leq \frac{1}{x \sqrt{2\pi}} e^{-\frac{x^2}{2} }  ,  \quad \forall x >0\] 
if $\chi$ is a standard Gaussian random variable. 

\BL \label{lem2} Let  $(\chi^{i}_{j,k})_{i,j,k}$ be a sequence of independent standard Gaussian random variables. Almost surely, there exists $J\in \N$ such that  $$  \sup_{i,k} | \chi^i_{j,k} | \leq \sqrt{2d j} \, , \quad \forall j \geq J. $$
\EL


The following lemma is a direct consequence of standard arguments  on random series, see e.g. \cite{Kahane}; we give a short proof for the sake of completeness.

\BL \label{lem:series}
Let $(\chi_j)_{j \in \N}$ be a sequence of independent  standard Gaussian random variables. 
If $\sum_{j \in \N}\ome _j  = + \infty$, then   $$\sum_{j \in \N} | \chi_j | \ome _j  = + \infty$$ almost surely. 
\EL

\begin{proof}
First, let us prove the result for I.I.D. Bernoulli  random variables $B_j$ of parameter $1/2$, i.e. such that  such that 
\[ \PP (B_j = 0 ) = \PP (B_j = 1 )= 1/2. \]
Since the convergence of the series $\sum_{j \in \N} B_j \ome_j$  does not depend on the values of the first $B_j$'s. Therefore the Kolmogorov 0-1 law applies, and the probability  that the series converges is either 0 or 1. Assume that it is 0, i.e. that the series converges almost surely. Then it is also the case for the series  $\sum Q_j \ome_j$, where  $Q_j = 1-B_j$ also  are I.I.D. Bernoulli  random variables. But the sum of these two series is the deterministic series $\sum_{j \in \N}\ome _j $ which diverges by assumption. 
Now, let $a>0$ be such that $\mathbb{P}(|\chi|\geq  a ) = 1/2$. Then, $(1_{\{ |\chi_{j}|\geq a\}})_{j \in \N}$ forms a sequence of I.I.D. Bernoulli random variables. Hence, the series
$
\sum_{j \in \N} a 1_{\{ |\chi_{j}|\geq a\}} \omega_j
$
diverges almost surely. The conclusion follows from the relation $a 1_{\{ |\chi_{j}|\geq a\}} \leq |\chi_j|$.
\end{proof}

\BP \label{propantimarpis} Assume that the wavelets  are 0-smooth and continuous. \begin{itemize}
    \item If $f$ is a periodic function and if the quantities $\ome_j$ defined by \eqref{omej} satisfy
\BE \label{racj}
\sum_{ j \in \N} \sqrt{j} \ome_j  < \infty ,  
\EE
   then the Gaussian wavelet randomization $X_f$ of $f$ is continous.
    \item  Conversely, if   $(\ome_j)_{j \in \N}$ is a non-negative sequence such that $\sum_{j \in \N} \ome_j  =  \infty $  and if the wavelets used are piecewise continuous, then  there exists  a function $ f=  \displaystyle\sum_{i,j,k}  c^{i}_{j,k} \ptjk  $ with 
\[
\sup_{i,k} | c^{i}_{j,k} |  = \ome_j  , \quad \forall j \in \N \] 
and such that the wavelet Gaussian randomization $X_f$ of $f$ is a.s. not bounded on $\TT^d$.
\end{itemize} 
  
   \EP

\begin{proof}
The first part of this proposition  is a direct consequence of Proposition \ref{prop1}  and Lemma \ref{lem2}.  The proof of the second part is an adaptation of the proof of  the second part of Proposition  \ref{prop1}, with two modifications: First, we use Lemma \ref{lem:series}. Then, the second modification is that we have to adapt to the fact that the Gaussian random variable  may take positive or negative values. For that, we also use another cube where $K'$ where \[ \forall x \in K', \qquad \psi^1 (x) \leq -C,  \] 
and  denote by $K'_{j,k}$ the dyadic cube 
\[ K'_{j,k} = \{ x : \, 2^j x-k \in K' \}. \]  One then modifies  the proof of  the second part of Proposition \ref{prop1} for the construction of the first function $f$,  by putting one $\omega_{j_{n+1}} \psi^{(1)}_{j_{n+1},k_{n+1}}$  such that $K_{j_{n+1},k_{n+1}} \subset 1/2. K_{j_n,k_n}$ and one $\omega_{j_{n+1}} \psi^{(1)}_{j_{n+1},k'_{n+1}}$  such that $K_{j_{n+1},k'_{n+1}} \subset 1/2. K'_{j_n,k_n}$. 
With this modification, depending if the corresponding Gaussian random variable  is positive or negative, we focus on the part of the function such that $ c^{1}_{j_n,k_n} \chi^i_{j_n,k_n}  \psi^{(1)}_{j_n,k_n}$ is positive and obtain an unbounded series. By the same argument as in the proof of Proposition \ref{prop1}, the other components of $f$ have a smooth contribution on $K_{j_n,k_n}$ and therefore do not modify the unboundedness of the series.
\end{proof}

  \BR \label{rem:chaos1}
  The arguments developed in the previous result can easily be extended to more general laws as follows. 
  \begin{itemize}
      \item Let us consider a law with exponential decay, i.e. a random variable $\chi$ such that
  \BE \label{eq:expdecay}
  \PP(|\chi|\geq x) \leq C e^{-b x^\gamma} , \quad \forall x >0
  \EE
  for some constants $C,b>0$ and $\gamma\in (0,2]$. Then the Borel-Cantelli Lemma yields that  almost surely 
  $$
  \sup_{i,k} | \chi^i_{j,k} | \leq \left(\frac{dj}{b}\right)^{\frac{1}{\gamma}} 
$$
 for every $j$ large enough. Hence, condition \eqref{racj} is replaced by $\sum_{ j \in \N}  j^{\frac{1}{\gamma}} \ome_j  < \infty$. 
      \item  An  assumption  less restrictive than exponential decay is the assumption that the random variable $\chi$ has finite moments of all orders. This is equivalent to the fact that the law has fast decay, i.e. 
  \BE \label{eq:fastdecay}
  \forall l >0 \quad \exists C>0 \quad \text{such that} \quad \PP(|\chi|\geq x) \leq \frac{C}{x^l} , \quad \forall x >0 .
  \EE
  Then the Borel-Cantelli Lemma gives  that  almost surely, for every $\varepsilon>0$, one has
  $$
  \sup_{i,k} | \chi^i_{j,k} | \leq 2^{\varepsilon j}
$$
 for every $j$ large enough. Therefore, condition \eqref{racj} is replaced by the existence of $\varepsilon >0$ such that 
 $\sum_{ j \in \N}  2^{\varepsilon j} \ome_j  < \infty$. 
  \end{itemize}
  In both cases, for the second part, if there is $a>0$  such that $\mathbb{P}(|\chi|\geq  a ) = 1/2$, then the argument remains unchanged. Otherwise, it suffices to generalize Lemma \ref{lem:series} with a thinning argument: assume that $a>0$ is such that one has $\PP(|\chi|\geq a) =p >0$. Consider a sequence $(B_j)_{j \in \N}$ of independent  Bernoulli random variables of parameter $p$. If $(\varepsilon_j)_{j \in \N}$ is a sequence of independent  Bernoulli random variables of parameter $\frac{1}{2p}$ independent of $(B_j)_{j \in \N}$, then $(\varepsilon_j B_j)_{j \in \N}$ is a sequence of independent Bernoulli random variables of parameter $\frac{1}{2}$, hence $\sum_{j \in \N} \varepsilon_j B_j \omega_j = + \infty$. We get therefore that $\sum_{j \in \N}  B_j \omega_j = + \infty$ since $\varepsilon_j \in \{0, 1\}$. 
  \ER

Proposition \ref{propantimarpis} does not provide a necessary and sufficient condition on the sequence $(\omega_j)_{j \in \N}$ to guarantee the boundedness of the wavelet Gaussian randomization. The next proposition shows however that the general assumption \eqref{racj} cannot be improved stronger than
\BE \label{loglogj}
\sum_{j \in \N} \frac{\sqrt{j}}{\log(\log(j))} \omega_j <\infty
\EE
when the wavelets are compactly supported.

\begin{prop}\label{prop:haar}
 Let $(\psi^{(i)}_{j,k})$ be a basis of compactly supported wavelets. There exist a non-negative sequence $(\omega_j)_{j \in \N}$ satisfying (\ref{loglogj}) and a function $f = \sum_{i,j,k} c^{i}_{j,k} \psi^{(i)}_{j,k}$ with \[
\sup_{i,k} | c^{i}_{j,k} |  = \ome_j  , \quad \forall j \in \N \] 
  such that the wavelet Gaussian randomization $X_f$ of $f$ is a.s.  nowhere locally bounded. 
\end{prop}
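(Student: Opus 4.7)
Plan: I would exhibit a very sparse set of active scales. Choose $(j_n)_n$ growing fast enough that $\log\log j_n \geq n^3$ for all $n \geq 2$, and set $\omega_{j_n} = n/\sqrt{j_n}$ and $\omega_j = 0$ otherwise. Condition \eqref{loglogj} is then immediate, since $\sum_j \sqrt{j}\omega_j/\log\log j = \sum_n n/\log\log j_n \le \sum_n 1/n^2 < \infty$, whereas the quantity $\omega_{j_n}\sqrt{j_n} = n$ diverges (this is the slack that the Gaussian randomization will exploit). Define $f$ by $c^{(1)}_{j_n,k} = \omega_{j_n}$ for every $k$, all other coefficients zero, so that the randomization decomposes as $X_f = \sum_n \omega_{j_n} Y_n$ with $Y_n(x) = \sum_k \chi^{(1)}_{j_n,k}\psi^{(1)}(2^{j_n}x-k)$, the $Y_n$ being mutually independent centered Gaussian processes.

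Next I would fix an arbitrary dyadic subcube $\lambda \subseteq [0,1]^d$ of scale $j_0$ and show $\sup_{x \in \lambda} X_f(x) = +\infty$ almost surely; a countable union over dyadic cubes then yields the nowhere-locally-bounded conclusion. For $j_n$ much larger than $j_0$, the compact support of $\psi^{(1)}$ together with the existence of a point $x^*$ with $\psi^{(1)}(x^*) \geq c_0 > 0$ allow one to evaluate $Y_n$ at a point that picks out essentially one coefficient; hence $\sup_{x \in \lambda} Y_n(x) \geq c_0 \max_k \chi^{(1)}_{j_n,k}$, where the maximum runs over the $\sim 2^{d(j_n - j_0)}$ indices $k$ such that the whole support of $\psi^{(1)}(2^{j_n}\cdot -k)$ lies inside $\lambda$. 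Classical Gaussian extreme-value asymptotics then give $\EEE[\max_k \chi^{(1)}_{j_n,k}] \geq c\sqrt{j_n}$, and therefore $\EEE[\sup_{x\in\lambda} Y_n] \geq c'\sqrt{j_n}$.

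The decisive step is a Jensen-type inequality for the supremum of a sum of independent centered processes: if $A, B$ are independent with $\EEE[A(t)] \equiv 0$, then $\EEE[\sup_t(A(t)+B(t))] \geq \EEE_B[\sup_t \EEE_A[A(t)+B(t)]] = \EEE[\sup_t B(t)]$. Applied to the splitting $X_f = \omega_{j_n} Y_n + \sum_{m \neq n}\omega_{j_m} Y_m$, this yields $\EEE[\sup_\lambda X_f] \geq c'\omega_{j_n}\sqrt{j_n} = c'n$ for every $n$, so $\EEE[\sup_\lambda X_f] = +\infty$. Combined with the standard Gaussian dichotomy (Fernique, or the contrapositive of Borell-TIS: the supremum of a separable centered Gaussian process is either almost surely finite with finite expectation, or almost surely infinite), this forces $\sup_\lambda X_f = +\infty$ almost surely.

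The main obstacle is the scale-$n$ lower bound $\EEE[\sup_\lambda Y_n] \geq c'\sqrt{j_n}$. Compact support of $\psi^{(1)}$ is essential: it is what lets one localize the evaluation of $Y_n$ in such a way that the sup reduces to a maximum over $\sim 2^{d(j_n-j_0)}$ genuinely i.i.d.\ standard normals, at which point the $\sqrt{2\log N}$ asymptotic applies with the correct constant (and this is exactly the same mechanism that produced the threshold $\omega_j\sqrt{j}$ in condition \eqref{racj}). Everything else---independence across scales, the sup-Jensen inequality, and the Gaussian zero-one dichotomy---is classical and robust.
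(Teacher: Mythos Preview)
Your argument is correct, modulo one imprecision: the claimed inequality $\sup_{x\in\lambda}Y_n(x)\geq c_0\max_k\chi^{(1)}_{j_n,k}$ is not literal, since at any evaluation point several neighbouring wavelets overlap and $Y_n(x_k)$ is a fixed linear combination of the nearby $\chi$'s rather than a single one. What you actually need---and correctly indicate in your final paragraph---is that after thinning the $k$'s to a sublattice coarser than the wavelet's support width, the values $Y_n(x_k)$ become i.i.d.\ centred Gaussians with variance bounded below by a positive constant, whence $\EEE[\sup_\lambda Y_n]\geq c'\sqrt{j_n}$ via the $\sqrt{2\log N}$ extreme-value bound.

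Your route differs substantially from the paper's. The paper chooses $j_n$ growing only geometrically, sets $\omega_{j_n}=(\sqrt{j_n}\log j_n\log\log j_n)^{-1}$, and uses a Borel--Cantelli argument to produce, almost surely, a \emph{nested} chain of cubes $K_{j_n,k_n}$ along which $\chi_{j_n,k_n}>\sqrt{j_n/3}$; the vanishing moment of the wavelets then forces the average of $X_f$ over $K_{j_M,k_M}$ to dominate the divergent partial sum $\sum_{n\le M}\sqrt{j_n}\,\omega_{j_n}$. You instead bound $\EEE[\sup_\lambda X_f]$ from below by isolating one scale at a time through the sup-Jensen inequality for independent centred summands, and close with the Fernique/Borell--TIS dichotomy. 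Your argument is shorter and more conceptual, but is tied to Gaussian-specific 0--1 and concentration machinery; the paper's hands-on construction uses only tail estimates and therefore carries over verbatim to the non-Gaussian laws discussed in the remark following the proposition, which your dichotomy step would not cover without additional work.
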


\begin{proof}
We  assume from now on   that $d=1$, the case $d >1$ being similar. 
Recall that if $\chi$ is a standard Gaussian random variable, then 
$$
{\mathbb{P}}(\chi \ge \sqrt{j} ) = \frac{1}{\sqrt{2 \pi j}} e^{-j/2}(1+o(1)) \ge e^{-3j/4}
$$
for $j$ large enough. Similarly to proof of Proposition \ref{prop1}, we denote by $K$ a dyadic interval such that $\psi \ge C $ on $K$ for some $C>0$. For $j \in \NN$ and $k \in \{0, \dots, 2^j -1\}$,  we define 
$$\lambda_{j,k} := \text{Supp } \psi_{j,k} \quad \text{and} \quad K_{j,k} := \{x \in \RR : 2^jx-k \in K \}.$$ 
Since the wavelets are compactly supported, there is $k_0 \in \N$ such that for all $j \ge 0, k, k' \in \{0,..., 2^j-1\}$, one has $\lambda_{j,k} \cap \lambda_{j,k'} = \emptyset$ if $\vert k - k' \vert \ge k_0$. 

We put  $j_n = (\lfloor\frac{2}{\log (2) - 1/4} \rfloor +1)^n$ and we define the sequence $(\omega_j)_{j \in \N}$ by setting $$\omega_j = \begin{cases}
   (\sqrt{j} \log j \log(\log j))^{-1} & \text{if } j=j_n\\
   0 & \text{otherwise.}
\end{cases}
$$
This sequence satisfies (\ref{loglogj}) but not (\ref{racj}). We consider $f$ given by
\BE \label{exhaar}
f = \sum_{n\in \N} \sum_{k \in k_0 \N, \, k \le 2^{j_n} -1} \omega_{j_n} \psi_{j_n,k} 
\EE
and its randomization
\BE
X_f=\sum_{n\in \N} \sum_{ k \in k_0 \N, \, k \le 2^{j_n} -1}  \chi_{j_n,k_n} \omega_{j_n} \psi_{j_n,k} .
\EE
We will check that  $X_f$ is not bounded on any dyadic interval $\lambda \subset [0,1]$. As in the proof of Proposition \ref{contreex}, it suffices to work on $[0,1]$.

For any $j \in \N$, we set $j'=(\lfloor\frac{2}{\log (2) - 1/4} \rfloor +1)j$ and we consider the event
$$
A_j = \left\{ \exists k \in \{0, \dots, 2^j -1\}\text{ such that } \forall k' \in k_0 \N \text { with } \lambda_{j',k'} \subset K_{j,k} \text{ one has } \chi_{j',k'} \le \sqrt{\frac{j'}{3}} \right\}.
$$
Note that there is $c = c(k_0, K) \in \N$ independent of $j$ and $j'$ such that $\# \{k \in k_0 \N: \lambda_{j',k'} \subset K_{j,k} \}= 2^{j'-j-c}$.
By the independence of the Gaussian random variables, we obtain 
\begin{eqnarray*}
\mathbb{P}(A_j) & \le & \sum_{k=0}^{2^j-1} \mathbb{P} \left(\forall k'\in k_0 \N \text { with } \lambda_{j',k'} \subset K_{j,k} \text{ one has }  \chi_{j',k'} \le \sqrt{\frac{j'}{3}}\right)\\
& \le & 2^{j}(1-e^{-j'/4})^{2^{(j'-j-c)}} \\
& \le & 2^{j} \exp (-2^{j'-j-c} e^{-j'/4})\\
& \le & 2^j \exp(-\tfrac{1}{2^c} 2^{-j}e^{j'(\log(2) - 1/4)})\\
& \le & 2^j \exp(-\tfrac{1}{2^c} 2^{-j}e^{2j})
\end{eqnarray*}
for every $j$ large enough. 
Hence, for the  $j_n = (\lfloor\frac{2}{\log (2) - 1/4} \rfloor +1)^n$, the Borel-Cantelli Lemma gives almost surely the existence of a $N \in \mathbb{N}$ and a sequence $(k_n)_{n \geq N}, \, k_n \in k_0 \N$ such that 
\begin{equation}\label{eq:bc}
    \lambda_{j_n,k_n} \subset K_{j_{n-1},k_{n-1}} \quad \text{ and for all }n \ge N , \quad \chi_{j_n,k_n} > \sqrt{\frac{j_n}{3}}.
\end{equation}

 We work now in the event  of probability $1$ given by the Borel-Cantelli Lemma.  The term $$f_N = \sum_{n < N} \sum_{k \in k_0 \N, \, k \le 2^{j_n} -1}  \chi_{j_n,k} \omega_{j_n}\psi_{j_n,k}$$ is obviously bounded and we focus on the rest. For any integer $M \ge N+1$, one has on $\lambda_{j_M,k_M}$
\begin{eqnarray*}
 g_N &: =& \sum_{n =N}^{+\infty} \sum_{k \in k_0 \N, \, k \le 2^{j_n} -1}  \chi_{j_n,k} \omega_{j_n} h_{j_n,k}\\
 & = & \sum_{n =N}^{M-1}  \chi_{j_n,k_n} \omega_{j_n} h_{j_n,k_n}  + \sum_{n \ge M} \sum_{k \in k_0 \N: \, \lambda_{j_n,k} \subset \lambda_{j_M,k_M}}  \chi_{j_n,k} \omega_{j_n} h_{j_n,k} \\
 &\ge & \sum_{n=N}^{M-1}  C\sqrt{\frac{j_n}{3}} \omega_{j_n} + \sum_{n \ge M} \sum_{k \in k_0 \N: \, \lambda_{j_n,k} \subset \lambda_{j_M,k_M}}  \chi_{j_n,k} \omega_{j_n} \psi_{j_n,k}.
\end{eqnarray*}
The second term has a vanishing integral on  $\lambda_{j_M,k_M}$ and it follows that the average value of $g_N$ on  $\lambda_{j_M,k_M}$ is larger than $C \sum_{n=N}^M \sqrt{\frac{j_n}{3}}\omega_{j_n}$. Since this term is not bounded in $M$, we get the conclusion.
\end{proof}

\BR \label{rem:chaos2}
\begin{enumerate}
    \item 

Again, this result can be extended to more general laws satisfying a condition of the type
$$
   \PP(|\chi|\geq x) \geq C' e^{-a x^\gamma} 
$$
for all $x>0$ large enough and 
for some constant $C',a>0$ and some exponent $\gamma \in (0,2]$. Note that is it for example the case of random variables living in a Wiener chaos of order $n \geq 1$ with $\gamma=\frac{2}{n}$, see \cite[Theorem 6.12]{janson}. In this case,  \eqref{loglogj} is replaced by $$
\sum_{j \in \N} \frac{j^{\frac{1}{\gamma}}}{\log(\log(j))} \omega_j <\infty .
$$
\item Conditions of type $2^{js}j^{\gamma}\omega_j<+\infty$ for $s\ge0$ means that the function $f$ belongs to the periodic Besov space of logarithmic smoothness $B^{s,\gamma}_{p,q}(\TT)$ defined via the Fourier-analytical approach.
The assumption $\sqrt{j}\omega_j <\infty$ of Proposition \ref{propantimarpis}  is equivalent to the fact that $f$ belongs to $B^{0,1/2}_{\infty, \infty}(\TT)$ and hence that $f$ is the derivative of a function of $B^{1,1/2}_{\infty,\infty}(\TT)$. This last space can be identified to the Zygmund class with logarithmic smoothness ${\cal{Z}}^{1,1/2}(\TT)$ :
$$
{\cal{Z}}^{1,1/2}(\TT) := \{ f \in {\cal{S}}'(\TT): \, \sup_{0< \vert h \vert <1} \frac{ \vert f(x+h)+f(x-h)- 2 f(x) \vert \sqrt{1-\log(h)}}{\vert h \vert} <+\infty\}.
$$
Note that for $s>0$, the definition of Besov spaces of logarithmic smoothness {\it via} the Fourier-analytical approach coincides with the definition of classical Besov spaces of logarithmic smoothness (defined {\it via} the integral on iterated differences of $f$). The classical definition can be extended to $s=0$ but does no more coincide with the Fourier-analytical approach. We refer to \cite{DT} for the identification of $B^{1,1/2}_{\infty,\infty}(\TT)$ with $ {\cal{Z}}^{1,1/2}(\TT) $ and for an extensive study of function spaces of logarithmic smoothness and to \cite{Almeida} for their wavelet characterizations.
\end{enumerate}
\ER



  \section{Modulus of continuity and concluding remarks} 
  
  \label{seccont}

One might expect that the loss of continuity (or boundedness)  through a  randomization of wavelet coefficients follows from the fact that continuity and boundedness are notions which cannot be characterized by a condition on the moduli of the wavelet coefficients. We now show that it is not the case, by considering the  H\"older spaces $C^\al $ for which the multiplier property holds.
The  spaces $C^\al (\RR^d) $ are characterized by a simple condition on the wavelet coefficients, see \cite{Mey90I}. Recall first that if $ \al \in (0,1)$, then $f \in C^\al (\RR^d)$ if $f \in L^\infty(\R^d)$ and if
\[ \exists C  >0 \quad \text{ such that } \quad | f(x)-f(y) | \leq C | x-y|^\al   \quad \forall x,y\in \R^d\]
and the same definition also applies in the periodic case. We also refer to \cite{Mey90I} for extensions when $\al \notin (0,1)$.  If the wavelet is $1$-smooth, the following wavelet characterization of the H\"older spaces holds: a function $f$ belongs to  $C^\al (\RR^d) $  if and only if  its wavelet coefficients satisfy 
\BE  \label{caractholder} \exists C>0  \quad \text{ such that } \quad \begin{cases}
|c_k| \leq C & \forall k \in \ZZ^d \\[1.5ex]
 |c^{i}_{j,k}| \leq C 2^{-\al j}& \forall  i\in \{1,\dots, 2^d-1\}, j\in \N ,k \in \ZZ^d 
 \end{cases}\EE
 and the characterization also holds in the periodic case. 
We will see that the H\"older spaces supply an interesting setting where the  conclusion concerning Rademacher and Gaussian randomization strongly differ. 

We start by some simple remarks.  First, of course, \eqref{caractholder} implies, both in the periodic and non-periodic case,  that a function in $C^\al $ remains in $C^\al $ after an i.i.d. randomization of the wavelet coefficients with $\chi$ bounded.

As regards  randomization using an unbounded random variable, one has to separate the periodic and non-periodic settings. 

In the case of functions defined on $\RR^d$, unbounded randomization  has the effect that, in general, the randomized wavelet series of a function in a $C^\al(\RR^d)$ space does not belong to $L^\infty(\RR^d)$. Consider for instance the function defined by the wavelet series
\BE \label{ftcal} \sum_{j \in \N,i,k} 2^{-\al j} \Psi^{(i)}_{j,k} . \EE
Using \eqref{caractholder}, this function clearly belongs to $C^\al (\RR^d) $. However, if  $\chi$ is unbounded, then  its randomization
\[ \sum_{j \geq 0,i,k} 2^{-\al j} \chi^i_{j,k} \Psi^{(i)}_{j,k}  \]
does not belong to $L^\infty(\RR^d)$. Indeed, at a given scale $j$, there is  an infinite number of  random variables $\chi^i_{j,k}$ so that  one has
\[ \sup_{i,k} |  \chi^i_{j,k} | = + \infty. \]
We can however expect that this stochastic  process is locally bounded. This is actually equivalent to considering the periodic setting, which we now do.  More precisely, we will study the uniform regularity of the randomization via its modulus of continuity (this context includes the case of the H\"oler spaces). We recall 
that a {modulus of continuity} is an increasing function $\theta$: $\RR^+ \rightarrow  \RR^+$ satisfying 
 $\theta (0) = 0$  and such that $\theta (2x) \leq C \theta (x)$ for all $x \in \RR^+$ and a constant $C>0$. 
We say that $\theta$ is a modulus of continuity of $f$ if there exists a constant $C >0$ such that 
  \BE \label{eq:modulus} \left| f(x) -f(y) \right| \leq C \theta (|x-y|) \EE
  for every $x,y$. Wavelet characterizations of regularity require the  following additional regularity property for moduli of continuity, see \cite{JaffMey1}. 
 
 \BD A modulus of continuity $\theta$ is regular if 
$$ \exists  N \in \N, \quad \forall  J \in \N, \qquad \label{modreg}   \begin{cases}  \displaystyle \sum_{j=J}^\infty 2^{Nj} \theta (2^{-j})  \leq C 2^{NJ} \theta (2^{-J}) \\[2ex] \displaystyle \sum_{j=-\infty}^J 2^{(N+1)j} \theta (2^{-j})  \leq C 2^{(N+1)J} \theta (2^{-J}).     \end{cases}  $$
 \ED

\BP\label{prop:caractunif}
Let $\theta$ be a  modulus of continuity and let $f:\RR^{d}\to
\R$ be a function. 
If   $\theta$ is a  modulus of continuity of $f$, then
\BE\label{eq:charactunif}
\exists C>0 \quad \forall i,j,k \quad |c^i_{j,k}| \leq C 
  \theta(2^{-j}).
\EE
Conversely, if \eqref{eq:charactunif} holds and if $\theta$ is
regular, then $\theta$ is a  modulus of continuity of $f$.
\EP 

Together with Remark 
\ref{rem:chaos1}, this characterization of uniform moduli of continuity
 directly gives the following result.

\BP \label{unifmod1} 
 Let $\chi^{(i)}_{j,k}$ be  a  sequence of  I.I.D. random variables with common law $\chi$ with exponential decay  of order $\gamma$ (see \eqref{eq:expdecay}) and let $f \in C^\alpha(\TT)$ where $\alpha \in (0,1)$.  
  Then the wavelet randomization $X_f$ of $f $ satisfies that almost surely, there exists $C>0$ such that
$$ \forall x, y \in \TT \qquad 
|X_f(x) - X_f(y)|\leq C |x-y|^\alpha |\log(|x-y|)|^{\frac{1}{\gamma}} .
$$

 \EP

The optimality of Proposition \ref{unifmod1} can be obtained for laws already considered in Remark \ref{rem:chaos2}, i.e.  random variables belonging to a Wiener chaos of order $\frac{2}{\gamma}$ and in particular for standard Gaussian random variables ($\gamma = 2$).

\BP \label{prop:optimalmodulus}
 Let $\chi^{(i)}_{j,k}$ be  a  sequence of  I.I.D. random variables with common law $\chi$ and assume that there exists some constant $C,C',a,b>0$ and an exponent $\gamma \in (0,2]$ such that, for  $x>0$ large enough,
 $$  
C' e^{-a x^{\gamma}} \leq \PP(|\chi|\geq x) \leq C e^{-b x^\gamma} . $$
 Let us consider the function $f$ defined on $\TT$ by
  $$
  f = \sum_{j \in \N} \sum_{i,k} 2^{-\alpha j} {\psi}^{(i)}_{j,k}
  $$
  where $\alpha \in (0,1)$. 
Then, almost surely the wavelet randomization $X_f$ of $f$ satisfies 
  $$
 0< \sup_{x\neq y} \frac{|X_f(x) - X_f(y)|}{|x-y|^\alpha |\log(|x-y|)|^{\frac{1}{\gamma}} }< + \infty.
  $$
\EP

\begin{proof}
The upper bound has already been obtained in Proposition \ref{unifmod1}. Note that in particular, $X_f$ is almost surely bounded. For the lower bound, we proceed as in 
Proposition \ref{prop:haar} and Remark \ref{rem:chaos2} to show that  for any  $c \in (0,{d \log 2}/{a})$, almost surely, there exist infinitely many scales $j$ such that 
$$
\sup_{i,k} |\chi^{(i)}_{j,k}| \geq (c j)^{\frac{1}{\gamma}} . 
$$
 The orthogonality of the wavelets and their first vanishing moment allow then to write
\begin{eqnarray}\label{eq:modulusBIS}
(c j)^{\frac{1}{\gamma}}2^{-\alpha j} \leq |\chi^{(i)}_{j,k}| 2^{-\alpha j} 
& \leq &2^{dj} \int_{\R^d} |X_f(x) - X_f(k2^{-j})| |\psi(2^jx-k)| dx \nonumber \\
& = &  \int_{\R^d} \big|X_f((u+k)2^{-j}) - X_f(k2^{-j})\big| |\psi(u)| du
\end{eqnarray}
We decompose now the integral into two parts according to $|u|\leq 2^{-j}$ or $|u|> 2^{-j}$. For the fist case, we write
\begin{eqnarray*}
 &&  \int_{|u|\leq 2^{-j}} \big|X_f((u+k)2^{-j}) - X_f(k2^{-j})\big| |\psi(u)| du  \\
   & \leq & 2^{-\alpha j}  \sup_{x \neq y}  \frac{|X_f(x) - X_f(y)|}{|x-y|^\alpha |\log(|x-y|)|^{\frac{1}{\gamma}} } \int_{|u| \leq 2^{-j}} |u|^\alpha |\log(|u2^{-j}|)|^{\frac{1}{\gamma}} |\psi(u)| du\\
   & \leq & D 2^{-\alpha j} j^{\frac{1}{\gamma}}  \sup_{x \neq y}  \frac{|X_f(x) - X_f(y)|}{|x-y|^\alpha |\log(|x-y|)|^{\frac{1}{\gamma}} }
\end{eqnarray*}
for some constant $D>0$, by noticing that $|\log(|u2^{-j}|)|^{\frac{1}{\gamma}}$ can be bounded  by $ |\log(|u|)|^{\frac{1}{\gamma}}|\log(|2^{-j}|)|^{\frac{1}{\gamma}} $ and using the decay of the wavelet. For the second term, we use the boundedness of $X_f$ together with the fast decay of the wavelet to obtain
\begin{eqnarray*}
 \int_{|u|> 2^{-j}} \big|X_f((u+k)2^{-j}) - X_f(k2^{-j})\big| |\psi(u)| du  
   & \leq  & 2 \|X_f\|_{\infty}\int_{|u|> 2^{-j}}  \frac{1}{(1+|u|)^{N+2}} du\\& 
 \leq & D' 2^{-Nj}
\end{eqnarray*}
for $N>\alpha$ and a constant $D'>0$. 
Putting these two cases together with \eqref{eq:modulusBIS} allows to write
$$
(c j)^{\frac{1}{\gamma}}2^{-\alpha j} \leq D 2^{-\alpha j} j^{\frac{1}{\gamma}}  \sup_{x \neq y}  \frac{|X_f(x) - X_f(y)|}{|x-y|^\alpha |\log(|x-y|)|^{\frac{1}{\gamma}} } + D' 2^{-Nj}
$$
which gives the conclusion. 
\end{proof}

\BR
The assumption of orthogonality of the wavelets can be weakened by imposing that the mother wavelet generates a biorthogonal basis. This setting allows to consider for example the fractional Brownian motion, by using the remarkable decomposition of
Meyer, Sellan and Taqqu \cite{MST}: Indeed, the fractional Brownian motion $B_{H}$ of Hurst exponent $H \in (0,1)$ can be written as 
\begin{equation}\label{eq:mbf}
B_{H}(t) = \sum_{j \in \N}\sum_{k \in \ZZ} 2^{-Hj} \xi_{j,k} \psi_{H+1/2} (2^{j}t-k)
 + R(t)
\end{equation}
where $R$ is a smooth process, $(\xi_{j,k})_{(j,k)\in\N  \times \ZZ}$ is a
sequence of I.I.D. standard Gaussian random variables, and  $\psi_{h+1/2} $ generates a biorthogonal wavelet basis. The optimality of the modulus of continuity  given in Proposition  \ref{prop:optimalmodulus} for of the Gaussian randomization a function in $C^{\alpha}(\TT)$ has been obtained using a function $f$ whose randomization is, up to a smooth process, very similar to the fractional Brownian motion. The precise pointwise behavior of such random wavelet series has been deeply studied in \cite{EsserLoosveldt}. In particular the authors obtained that there are two other kinds of points at which the pointwise oscillation is slower than the one given by the modulus of continuity. See also \cite{DawLoosveldt} for results in a chaos of order 2. 
\ER

If the random variables $\xi_{j,k}$ satisfy only the fast decay assumption given in \eqref{eq:fastdecay}, one cannot obtain the exact modulus of continuity. However, if $f\in C^{\alpha}(\TT)$, then almost surely, for all $\varepsilon>0$, one has $X_f \in C^{\alpha -\varepsilon}$.  
Hence, if one defines the {uniform H\"older exponent} of a function $f$ on $\TT$  by
  \[ H_{\min} (f) = \sup \{ \al>0 :  f \in C^\al (\TT) \} , \]  
 then one has
  \[ H_{\min} (X_f) \geq H_{\min} (f) \] 
almost surely. The inequality above  actually is an equality: indeed let us fix $\beta >0$ such that $f \notin C^{\beta}(\TT)$. The wavelet characterization of the H\"older space gives the existence of a sequence $(i_{n},j_{n},k_{n})$ such
that the wavelet coefficients of $f$ satisfy $
\sup_{n \in \N} 2^{\beta j_{n}} |c^{i_{n}}_{j_{n},k_{n}}| =  +
\infty $.
For every $\varepsilon>0$,  
$$
\sum_{n \in \N}\mathbb{P}(|\chi^{i_n}_{j_{n},k_{n}}| \geq 2^{- \varepsilon
  j_{n}}) = + \infty ,
$$
and  the Borel-Cantelli Lemma implies that  $\sup_{n\in \N} 2^{(\beta + \varepsilon)
  j_{n}} |c^{i_n}_{j_{n},k_{n}}|  |\chi_{j_{n},k_{n}}| =  +
\infty $, hence $X \notin C^{\beta + \varepsilon}(\TT)$ almost surely. It follows that
  \[ H_{\min} (X_f) = H_{\min} (f)\] 
almost surely.

 Let us conclude  by working out an example which puts into light in a concrete way the  fundamental  difference between   Fourier  vs.  wavelet randomization. 
  Let $\parx$ be the ``sawtooth function''
\[ \parx   = 
\begin{cases}
x-[ x] -\frac{1}{2} & \mbox{if} \;\; x \notin \ZZ \\
 & 0  \mbox{ otherwise.} \end{cases} \]
Its Fourier series is 
\BE \label{poisson} \parx = -\sum_{ m=1}^\infty \frac{ \sin ( 2 \pi mx)}{\pi m}. \EE
Recall  now Wiener's remarkable Fourier expansion for the Brownian motion on $[0,1]$:
\[ B(x) =  \sqrt{2}  \chi_0  x +  \sum_{ m=1}^\infty  \chi_m \frac{ \sin (2  \pi mx)}{ \pi m}, \]
where  $(\chi_m)_{m \in \N} $ is a sequence of I.I.D. standard   Gaussian  random variables. 
Therefore the Gaussian randomization of the sawtooth function is, up to a linear term, the Brownian motion. This example makes explicit the smoothing effect of this randomization: we start with a discontinuous function, and we obtain  a stochastic process the sample paths of which belong to $C^{1/2 -\ep }$ for any $\ep >0$; but this gain in regularity is compensated by the fact that smoother singularities are ``spread everywhere'':  almost surely, for all  $x_0$, one has $ B$ does not belong to the pointwise H\"older space $C^{1/2} (x_0) $.   

Assume now that we use compactly supported wavelets. For  $j$  large enough  the support of $\pjk$  has length less than 1,  and the periodized wavelets coincide with the wavelets on $\RR$; it follows that the wavelet coefficients $c_{j,k}$ of the sawtooth function are the same as those of  the Heaviside function \eqref{heavi1} (up to a factor 2). Therefore equation \eqref{heavi2} holds, thus yielding an infinite number of wavelet coefficients with exactly the same size. Since Gaussian random variable  can take arbitrarily large values, it follows that, after randomization this sequence is not bounded, and  Proposition \eqref{propcritqsimple} yields that almost surely  the sample paths of the corresponding process are not bounded. 

\begin{figure*}[h!]
\begin{center}
\includegraphics[scale=0.25]{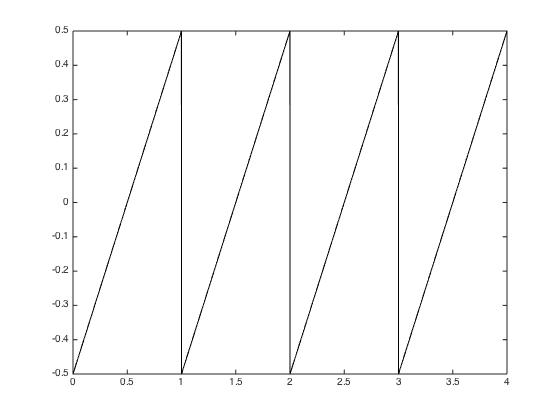}
\includegraphics[scale=0.25]{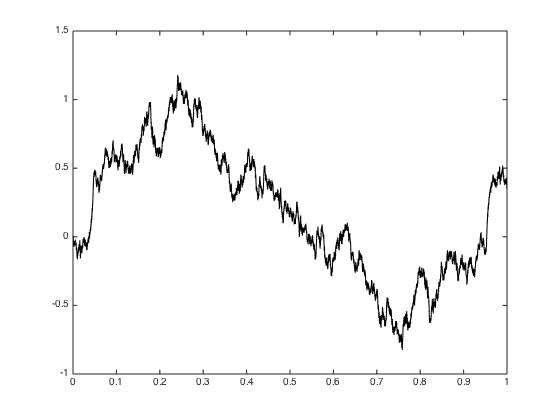}
\includegraphics[scale=0.25]{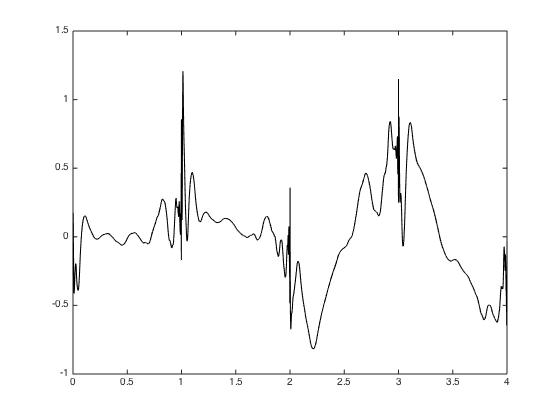}
\caption{From left to right : The Sawtooth function, approximation of the Brownian motion obtained with the help of the Randomized Fourier Series of the Sawtooth function (Wiener Formula), and the Randomized Wavelet Series of the Sawtooth function 
 with Daubechies-10 wavelets. Signals are of length $2^{17}$ and the approximation of the Brownian motion is stopped at $m=2^{14}$ for the Sine Series. One can observe that the randomized wavelet series starts to diverge at the discontinuities of the sawtooth function and is smooth elsewhere, in sharp contradistinction with the Fourier randomization which cannot keep memory of the localization of the singularities. }
\end{center}
\end{figure*}

Note however that, for wavelet randomization, the singular behavior remains located  where the singular behavior of the initial function was: for any $\ep >0$,  the stochastic process thus obtained  is bounded, and even is as smooth that the wavelet of any interval $\mathbb{T} \setminus [-\ep, \ep] $. This is in sharp contradistinction with Fourier randomization. 
Let us already mention that the study of the pointwise regularity of random wavelet series and random Fourier series  will be investigated in details in a forthcoming work.

 \bibliographystyle{plain}
 \bibliography{bib-Gauss.bib}

\end{document}